\definecolor{hot}{RGB}{65,105,225}
\theoremstyle{plain}
\newtheorem{theorem}{Theorem}[section]
\newtheorem{prop}[theorem]{Proposition}
\newtheorem{cor}[theorem]{Corollary}
\theoremstyle{definition}
\newtheorem*{ex*}{Example}
\newtheorem*{question}{Question}
\newcommand\sO{{\mathcal O}}
\newcommand\sH{{\mathcal H}}
\newcommand\pp{{\mathbb{P}}}
\newcommand\zz{{\mathbb{Z}}}
\newcommand\cc{{\mathbb{C}}}
\DeclareMathOperator{\MLdeg}{MLdeg}
\title[Fermat hypersurfaces]{Maximum likelihood degree of Fermat hypersurfaces via Euler characteristics}
\begin{document}

\author{Botong Wang}
\email{botong.wang@wis.kuleuven.be }
\address{KU Leuven, Department of Mathematics,
Celestijnenlaan 200B, B-3001 Leuven, Belgium} 


\begin{abstract}
Maximum likelihood degree of a projective variety is the number of critical points of a general likelihood function. In this note, we compute the Maximum likelihood degree of Fermat hypersurfaces. We give a formula of the Maximum likelihood degree in terms of the constants $\beta_{\mu, \nu}$, which is defined to be the number of complex solutions to the system of equations $z_1^\nu=z_2^\nu=\cdots=z_\mu^\nu=1$ and $z_1+\cdots +z_\mu+1=0$. 
\end{abstract}


\maketitle
\section{introduction}
The maximum likelihood estimate is a fundamental problem in statistics. Maximum likelihood degree is the number of potential solutions to the maximum likelihood estimation problem on a projective variety. When the variety is smooth, Huh \cite{Hu} showed that the Maximum likelihood degree is indeed a topological invariant. If the variety is a general complete intersection, the maximum likelihood degree is computed in \cite{CHKS} (see also \cite{HS}). 

In a recent preprint \cite{AAGL}, Agostini, Alberelli, Grande and Lella studied the maximum likelihood degree of Fermat hypersurfaces. They obtained formulas for the maximum likelihood degree of a few special families of Fermat surfaces. However, their approach is through a case-by-case study. 

In this note, we propose to compute the Maximum likelihood degree of Fermat hypersurfaces in a more systematic way via topological method. In general, the formula given in \cite{CHKS} does not work for all the Fermat hypersurfaces, because the intersection of hypersurfaces 
$$\{x_0^d+x_1^d+\cdots+x_n^d=0\}\cap \{x_0+x_1+\cdots+x_n=0\}\subset \pp^n$$
may not be transverse. We will compute the error terms introduced by the non-transverse intersections. The main ingredient is Milnor's result on the topology of isolated hypersurfaces singularities. This topological approach is closely related to the approach of \cite{BW} and \cite{RW}. In fact, for an isolated hypersurface singularity, the Euler obstruction is up to a sign equal to the Milnor number plus one. So we essentially apply the ideas of \cite{BW} and \cite{RW} to these particular examples. 

First, let us recall the definition of Maximum likelihood degree. Let $\pp^n$ be the $n$-dimensional complex projective space with homogeneous coordinates $(x_0, x_1, \ldots, x_n)$. Denote the coordinate plane $\{x_i=0\}\subset \pp^n$ by $H_i$, and the hyperplane $\{x_0+x_1+\cdots+x_n=0\}$ by $H_+$. Let the index set $\Lambda=\{0, 1, \ldots, n, +\}$, and let $\sH=\bigcup_{\lambda\in \Lambda}H_\lambda$. Let $X\subset\pp^n$ be a complex projective variety. Denote the smooth locus of $X$ by $X_{\textrm{reg}}$. The \textbf{Maximum likelihood degree} of $X$ is defined to be the number of critical points of the likelihood function 
$$l_u=\frac{x_0^{u_0}x_1^{u_1}\cdots x_n^{u_n}}{(x_0+x_1+\cdots+x_n)^{u_0+u_1+\cdots+u_n}}$$
on $X_{\textrm{reg}}\setminus \sH$ for generic $(u_i)_{0\leq i\leq n}\in \zz^{n+1}$. 
\begin{theorem}\label{thm}
Denote the Fermat hypersurface $\{x_0^d+x_1^d+\cdots+x_n^d=0\}\subset \pp^n$ by $F_{n, d}$, and denote its maximum likelihood degree by $\MLdeg(F_{n, d})$. Then,
\begin{equation}\label{main}
\MLdeg(F_{n, d})=d+d^2+\cdots+d^n-\sum_{0\leq j\leq n-1}{n+1\choose j}\beta_{n-j, d-1}
\end{equation}
where $\beta_{\mu, \nu}$ is the number of complex solutions of the system of equations
\begin{gather*}
z_1^{\nu}=z_2^{\nu}=\ldots=z_{\mu}^{\nu}=1\\
z_1+\ldots+z_{\mu}+1=0. 
\end{gather*}

\end{theorem}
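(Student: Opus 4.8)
The plan is to compute the maximum likelihood degree topologically and then evaluate the resulting Euler characteristic by inclusion--exclusion. Since $F_{n,d}$ is smooth (its partials $dx_i^{d-1}$ have no common projective zero), the very affine variety $U:=F_{n,d}\setminus\sH$ is smooth of dimension $n-1$, so by Huh's theorem \cite{Hu} one has
\[
\MLdeg(F_{n,d})=(-1)^{n-1}\chi(U).
\]
I would expand $\chi(U)$ by inclusion--exclusion over the intersections $H_S:=\bigcap_{\lambda\in S}H_\lambda$, writing $\chi(U)=\sum_{S\subseteq\Lambda}(-1)^{|S|}\chi(F_{n,d}\cap H_S)$, and then split the sum according to whether $+\in S$.

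For $S\subseteq\{0,\dots,n\}$ with $|S|=k$, setting those coordinates to zero identifies $F_{n,d}\cap H_S$ with the smooth Fermat hypersurface $F_{n-k,d}$ (empty once $k\ge n$). For $S=S'\cup\{+\}$ with $|S'|=j$ one gets $F_{n-j,d}\cap H_+$, a degree-$d$ hypersurface inside $H_+\cong\pp^{n-j-1}$ which is in general singular. Replacing each such term momentarily by the Euler characteristic $\chi(F_{n-j-1,d})$ of a \emph{smooth} degree-$d$ hypersurface of the same dimension produces a naive Euler characteristic that is exactly the contribution of $n+2$ hyperplanes in general position relative to $F_{n,d}$. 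Using $\binom{n+1}{k}+\binom{n+1}{k-1}=\binom{n+2}{k}$ to merge the two families, the naive part collapses to $\sum_k(-1)^k\binom{n+2}{k}\chi(F_{n-k,d})$, which I would evaluate (from $\chi(F_{m,d})=(m+1)+\tfrac{(1-d)^{m+1}-1}{d}$, or by recognizing it as the maximum likelihood degree of a generic degree-$d$ hypersurface) so that $(-1)^{n-1}$ times it yields the main term $d+d^2+\cdots+d^n$.

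It then remains to correct for the genuine singularities of $V_{m,d}:=F_{m,d}\cap H_+$. A point of $H_+$ lies in the singular locus of $V_{m,d}$ exactly when the gradient of $\sum x_i^d$ is proportional to $(1,\dots,1)$, i.e.\ $x_0^{d-1}=\cdots=x_m^{d-1}$; after rescaling each coordinate is a $(d-1)$-st root of unity, and then $\sum x_i^d=\sum x_i=0$ holds automatically on $H_+$. Thus the singular points are precisely the projective solutions of $z_1^{d-1}=\cdots=z_m^{d-1}=1$, $z_1+\cdots+z_m+1=0$, so there are $\beta_{m,d-1}$ of them, all lying in the torus. The key local claim is that each is an ordinary node: the Hessian of $\sum x_i^d$ restricted to $H_+$ is diagonal with entries $d(d-1)x_i^{d-2}$, and (using $\sum x_i=0$ via Euler's relation) its only kernel direction is the radial one, so after projectivizing the singularity is nondegenerate with Milnor number $1$. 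Milnor's description of the vanishing cohomology of an isolated hypersurface singularity, in the spirit of \cite{BW,RW}, then gives $\chi(V_{m,d})=\chi(F_{m-1,d})+(-1)^{m-1}\beta_{m,d-1}$.

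Finally I would substitute this back: the difference between the true and the naive Euler characteristics equals $-\sum_j(-1)^j\binom{n+1}{j}(-1)^{n-j-1}\beta_{n-j,d-1}=(-1)^{n}\sum_j\binom{n+1}{j}\beta_{n-j,d-1}$, and multiplying by $(-1)^{n-1}$ turns the main term and this correction into the right-hand side of \eqref{main}. I expect the main obstacle to be the local analysis of $V_{m,d}$: one must verify that the only singularities are the $\beta_{m,d-1}$ points, that they are isolated, and above all that each is a node, since the Hessian on $H_+$ looks degenerate and only becomes nondegenerate once the radial kernel direction is removed by projectivization. Establishing $\mu_p=1$ at every singular point is precisely what makes the error term depend only on the \emph{count} $\beta_{m,d-1}$ rather than on weighted Milnor numbers.
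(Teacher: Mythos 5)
Your proposal follows the paper's proof essentially step for step: Huh's theorem, inclusion--exclusion split according to whether $H_+$ occurs in the intersection, the identity ${n+1\choose k}+{n+1\choose k-1}={n+2\choose k}$ reducing the ``naive'' part to the maximum likelihood degree $d+\cdots+d^n$ of a generic hypersurface, and the identification of the singular points of $F_{m,d}\cap H_+$ as $\beta_{m,d-1}$ isolated points each of Milnor number one. The only (cosmetic) difference is in the local step: you check nondegeneracy of the Hessian on $H_+$ after quotienting by the radial kernel direction, while the paper computes the Jacobian ideal in an affine chart and invokes Nakayama's lemma --- both arguments amount to showing each singularity is an ordinary node.
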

When $\mu$ or $\nu$ is small, $\beta_{\mu,\nu}$ can be easily calculated. For example, 
\begin{equation}
\beta_{\mu, 1}=0.
\end{equation}

\begin{equation}
\beta_{1, \nu}=
\begin{cases}
0 & \text{if $\nu$ is odd},\\
1 & \text{if $\nu$ is even}.
\end{cases}
\end{equation}

\begin{equation}
\beta_{2, \nu}=
\begin{cases}
2 & \text{if $\nu$ is divisible by 3},\\
0 & \text{otherwise}.
\end{cases}
\end{equation}

With these calculations, we recover all the closed formulas in \cite{AAGL}. 
\begin{cor}
\begin{equation}
\MLdeg(F_{n, 2})=2^{n+1}-2
\end{equation}

\begin{equation}
\MLdeg(F_{2, d})=
\begin{cases}
d^2+d & \text{if $d\equiv 0, 2\mod 6$},\\
d^2+d-3 & \text{if $d\equiv 3, 5\mod 6$},\\
d^2+d-2 & \text{if $d\equiv 4\mod 6$},\\
d^2+d-5 & \text{if $d\equiv 1\mod 6$}.
\end{cases}
\end{equation}
\end{cor}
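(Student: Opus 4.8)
The plan is to derive both formulas by substituting the explicit values of $\beta_{\mu,\nu}$ recorded just above the corollary into the master formula \eqref{main} of Theorem~\ref{thm}, and then organizing the resulting correction term by residue classes. No new geometry is needed; everything reduces to elementary bookkeeping once Theorem~\ref{thm} and the three displayed evaluations of $\beta$ are in hand.

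For the first formula I would set $d=2$, so that every $\beta$ appearing in \eqref{main} has second index $d-1=1$. The correction sum becomes $\sum_{0\leq j\leq n-1}\binom{n+1}{j}\beta_{n-j,1}$, and since $\beta_{\mu,1}=0$ for every $\mu$, this entire sum vanishes. Hence $\MLdeg(F_{n,2})=2+2^2+\cdots+2^n$, and summing the geometric series gives $2^{n+1}-2$, as claimed.

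For the second formula I would set $n=2$, so that the correction sum in \eqref{main} has only the two terms $j=0,1$. It then reads
\begin{equation*}
\MLdeg(F_{2,d})=d+d^2-\binom{3}{0}\beta_{2,d-1}-\binom{3}{1}\beta_{1,d-1}=d+d^2-\bigl(\beta_{2,d-1}+3\beta_{1,d-1}\bigr),
\end{equation*}
so the whole content lies in evaluating the correction term $\beta_{2,d-1}+3\beta_{1,d-1}$. Using the recorded formulas, $\beta_{1,d-1}$ equals $1$ when $d-1$ is even (equivalently $d$ odd) and $0$ otherwise, while $\beta_{2,d-1}$ equals $2$ when $3\mid(d-1)$ (equivalently $d\equiv 1\pmod 3$) and $0$ otherwise. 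The correction term therefore depends only on the parity of $d$ together with $d\bmod 3$, i.e.\ on $d\bmod 6$.

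The remaining step is a finite case check over the six residues of $d$ modulo $6$. I expect it to confirm that the correction is $0$ for $d\equiv 0,2$; it is $3$ for $d\equiv 3,5$ (only $\beta_{1,d-1}$ contributes); it is $2$ for $d\equiv 4$ (only $\beta_{2,d-1}$ contributes); and it is $2+3=5$ for $d\equiv 1$, this being the unique residue class for which $d$ is simultaneously odd and congruent to $1\pmod 3$. Subtracting these corrections from $d^2+d$ reproduces the four cases in the statement. There is no genuine obstacle here; the only care needed is to check that the residue pairings appearing in the statement ($\{0,2\}$ and $\{3,5\}$) are exactly the classes yielding equal corrections, which the parity-versus-mod-$3$ bookkeeping makes transparent.
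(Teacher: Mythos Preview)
Your proposal is correct and is exactly the approach the paper intends: it simply substitutes the recorded values $\beta_{\mu,1}=0$, $\beta_{1,\nu}$, and $\beta_{2,\nu}$ into the main formula of Theorem~\ref{thm} and reads off the cases, which is precisely what the paper means by ``with these calculations, we recover all the closed formulas.'' Your case-by-case bookkeeping modulo $6$ is accurate and in fact spells out more detail than the paper provides.
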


When $\nu$ is a power of a prime number, we have formulas to compute $\beta_{\mu, \nu}$. Equivalently, when $d-1$ is a power of a prime number, we have closed formulas for $\MLdeg(F_{n, d})$. In fact, by a straight forward computation one can deduce the following corollary from Theorem \ref{thm} and Proposition \ref{com}. 
\begin{cor}
Suppose $d-1=p^r$, where $p$ is a prime number and $r$ is a positive integer. Then 
$$
\MLdeg(F_{n, d})=d+d^2+\cdots+d^n-\frac{1}{d-1}\sum \frac{(n+1)!}{\left(n+1-p(s_1+\cdots+s_k)\right)!\cdot \left((s_1)!\cdots(s_k)!\right)^p}
$$
where $k=\frac{d-1}{p}$ and the sum is over all nonnegative integers $s_1, \ldots, s_k$ with $1\leq s_1+\cdots+s_k\leq \frac{n+1}{p}$. 
\end{cor}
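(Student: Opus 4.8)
The plan is to substitute into \eqref{main} the explicit value of $\beta_{\mu,\nu}$ in the prime-power case $\nu=d-1=p^r$, and then reorganize the resulting double sum into the single sum appearing in the statement. The combinatorial heart is the classical structure of vanishing sums of prime-power roots of unity, which is presumably the content of Proposition \ref{com}; I would record it first. Write $\mu_{p^r}\subset\cc^*$ for the group of $p^r$-th roots of unity and $\mu_p\subset\mu_{p^r}$ for its unique subgroup of order $p$. The key fact is that a multiset of $p^r$-th roots of unity sums to $0$ if and only if it is a union, with multiplicities, of the $k=p^{r-1}$ cosets of $\mu_p$ in $\mu_{p^r}$; each such coset sums to $0$ since it is a rotation of $\{1,\zeta_p,\ldots,\zeta_p^{p-1}\}$. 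Hence ordered $(\mu+1)$-tuples $(w_0,\ldots,w_\mu)\in\mu_{p^r}^{\,\mu+1}$ with $\sum_i w_i=0$ are enumerated by assigning a multiplicity $s_j\ge 0$ to each coset $C_j$ and arranging the resulting multiset, in which each of the $p$ elements of $C_j$ occurs $s_j$ times; the number of such tuples is the multinomial
\[
N \;=\;\sum_{s_1+\cdots+s_k=(\mu+1)/p}\frac{(\mu+1)!}{(s_1!\cdots s_k!)^{p}},
\]
understood to be $0$ unless $p\mid\mu+1$.

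Next I would pass from $N$ to $\beta_{\mu,p^r}$ by a symmetry argument. Multiplication by a fixed $\omega\in\mu_{p^r}$ carries solutions of $\sum_i w_i=0$ to solutions and sends the first coordinate $w_0$ to $\omega w_0$; this exhibits a bijection between the solutions with $w_0=1$ and those with any other prescribed first coordinate, so exactly a $\tfrac1{p^r}$-fraction of the $N$ tuples have $w_0=1$. Fixing $w_0=1$ and renaming $w_1,\ldots,w_\mu=z_1,\ldots,z_\mu$ reproduces the defining equations of $\beta_{\mu,p^r}$, whence
\[
\beta_{\mu,p^r}\;=\;\frac{1}{p^r}\sum_{s_1+\cdots+s_k=(\mu+1)/p}\frac{(\mu+1)!}{(s_1!\cdots s_k!)^{p}}.
\]

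Finally I would substitute this into \eqref{main}. Re-indexing the sum in \eqref{main} by $\mu=n-j$ converts $\binom{n+1}{j}$ into $\binom{n+1}{\mu+1}$, so the correction term becomes $\sum_{\mu=1}^{n}\binom{n+1}{\mu+1}\beta_{\mu,p^r}$. Writing $m=\mu+1=p(s_1+\cdots+s_k)$ and using $\binom{n+1}{m}=\frac{(n+1)!}{m!\,(n+1-m)!}$, the factor $\binom{n+1}{m}\cdot\frac{m!}{(s_1!\cdots s_k!)^{p}}$ collapses to $\frac{(n+1)!}{(n+1-m)!\,(s_1!\cdots s_k!)^{p}}$, in which $m!$ cancels. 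Merging the outer sum over $m$ with the inner sum over the $s_j$ into one sum over all nonnegative $s_1,\ldots,s_k$ then gives the stated expression, with the range $1\le s_1+\cdots+s_k\le\frac{n+1}{p}$ arising from $2\le m\le n+1$ together with $p\ge 2$; the overall factor $\frac1{p^r}=\frac1{d-1}$ is carried through unchanged.

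Granting the structure theorem underlying Proposition \ref{com}, the remainder is the bookkeeping identity just sketched, so I do not expect a genuine obstacle so much as a place to be careful: the main point requiring attention is keeping the factorials aligned as the binomial $\binom{n+1}{m}$ is expanded and the two sums are merged, and checking that the endpoints of the summation range transfer correctly. If instead Proposition \ref{com} is phrased differently from the coset count above, the only adjustment would be to reconcile its normalization with the multinomial form of $N$ before performing the same substitution.
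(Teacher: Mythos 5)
Your proposal is correct and follows essentially the same route as the paper: the coset-of-$\mu_p$ structure of vanishing sums of prime-power roots of unity is exactly the content of Section \ref{four} (giving $\alpha_{\mu+1,\nu}$ as the multinomial sum and $\beta_{\mu,\nu}=\frac{1}{\nu}\alpha_{\mu+1,\nu}$, i.e.\ Proposition \ref{com}), and the substitution into \eqref{main} with the re-indexing $m=\mu+1=p(s_1+\cdots+s_k)$ and the cancellation of $m!$ against $\binom{n+1}{m}$ is precisely the ``straightforward computation'' the paper leaves to the reader. Your bookkeeping of the summation range $1\le s_1+\cdots+s_k\le \frac{n+1}{p}$ is also correct.
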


To find a general formula for $\beta_{\mu, \nu}$ would be a very hard question in number theory and combinatorics. In fact, determining when $\beta_{\mu, \nu}\neq 0$ had been an open question for a long time, and it was solved by Lam and Leung \cite{LL} in 2000. 

Since the Fermat hypersurface $F_{n, d}$ is smooth, by \cite{Hu} $\MLdeg(F_{n, d})$ is equal to the the signed Euler characteristic $\chi(F_{n, d}\setminus \sH)$. In section \ref{two}, we will compute $\chi(F_{n, d}\setminus \sH)$, and we will postpone the technical calculation of the Milnor numbers to section \ref{three}. In the last section, we will briefly discuss what we know about the constants $\beta_{n, d}$. 

\subsection*{Acknowledgement}
We thank Jiu-Kang Yu and Zhengpeng Wu for helpful discussions about the constants $\beta_{\mu, \nu}$. 

\section{Computing the Euler characteristics}\label{two}
By the following theorem of Huh \cite{Hu}, we reduce the problem of computing $\MLdeg(F_{n, d})$ to computing $\chi(F_{n, d}\setminus \sH)$. Recall that in $\pp^n$, $\sH=\bigcup_{\lambda\in \Lambda}H_\lambda$ is the union of all coordinate hyperplanes and the hyperplane $H_{+}=\{x_0+x_1+\cdots+x_n=0\}$. 
\begin{theorem}[Huh, \cite{Hu}]\label{Huh}
If $X\subset \pp^n$ is a subvariety such that $X\setminus \sH$ is smooth, then
$$
\MLdeg(X)=(-1)^{\dim(X)}\chi(X\setminus \sH). 
$$
\end{theorem}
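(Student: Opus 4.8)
The plan is to recast the statement as a count of zeros of a generic logarithmic $1$-form and then identify that count with a signed Euler characteristic through Chern classes. Write $U = X_{\textrm{reg}}\setminus \sH$; since $X\setminus\sH$ is assumed smooth, every point of $X$ off $\sH$ is a smooth point, so $U=X\setminus\sH$ is a smooth quasi-projective variety of dimension $d := \dim X$. On $\pp^n\setminus\sH$ the functions $y_i = x_i/(x_0+\cdots+x_n)$ are regular and nowhere vanishing, and they satisfy $y_0+\cdots+y_n=1$; hence $\pp^n\setminus\sH$, and therefore $U$, embeds as a closed subvariety of the algebraic torus $(\cc^*)^{n+1}$. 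In these coordinates the likelihood function becomes the monomial $l_u=y_0^{u_0}\cdots y_n^{u_n}$, so its logarithmic differential is the closed $1$-form
$$\omega_u = d\log l_u = \sum_{i=0}^n u_i\,\frac{dy_i}{y_i}.$$
A point of $U$ is a critical point of $l_u$ exactly when $\omega_u$ vanishes there, so $\MLdeg(X)$ is the number of zeros on $U$ of $\omega_u$ for generic $u$, and the goal is to show this number equals $(-1)^d\chi(U)$.

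First I would choose a good compactification: a smooth projective variety $\overline{U}$ containing $U$ as a dense open subset, with boundary $D=\overline{U}\setminus U$ a simple normal crossings divisor, obtained by resolving a projective closure of $U$. Because each $y_i$ is an invertible function on $U$, the form $\frac{dy_i}{y_i}$ extends to a global section of the logarithmic cotangent bundle $\Omega^1_{\overline{U}}(\log D)$; hence $\omega_u$ is, for every $u$, a global section of $\Omega^1_{\overline{U}}(\log D)$. The strategy is then to compute the number of interior zeros of this section as a Chern number. By the logarithmic Gauss--Bonnet theorem the top Chern class satisfies
$$\int_{\overline{U}} c_d\!\left(\Omega^1_{\overline{U}}(\log D)\right) = (-1)^d\,\chi(\overline{U}\setminus D) = (-1)^d\,\chi(U),$$
so it suffices to prove that for generic $u$ the section $\omega_u$ has no zeros on the boundary $D$ and only nondegenerate zeros in $U$, whence the total zero count of a generic section of a rank-$d$ bundle on the $d$-fold $\overline{U}$ equals this Chern number and is concentrated in $U$.

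The genericity argument splits into two parts, both governed by the linear family $\{\omega_u\}$ over the parameter $u\in\cc^{n+1}$. For the boundary: near a point of $D$ choose local coordinates in which $D$ is a union of coordinate hyperplanes; expressing each $\frac{dy_i}{y_i}$ in the logarithmic frame and using that the vanishing orders of the $y_i$ along the boundary divisors are not all proportional, one shows that the set of $u$ for which $\omega_u$ meets $D$ is a proper subvariety of $u$-space. For the interior: a Bertini-type transversality argument over the total space $U\times\cc^{n+1}$ shows that for $u$ outside another proper subvariety all zeros of $\omega_u$ in $U$ are nondegenerate and finite in number. Intersecting the two open conditions yields a dense set of admissible $u$.

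The main obstacle is the boundary-vanishing step. One must control $\omega_u$ uniformly along every stratum of the normal crossings boundary $D$ and verify that genericity of $u$ forces $\omega_u$ away from $D$, which is precisely where the logarithmic behaviour of the coordinate functions $y_i$ — and the smoothness hypothesis on $X\setminus\sH$ together with the generic choice of $u$ — is essential. Once the section is confined to the interior with nondegenerate zeros, the identification of the count with the logarithmic Chern number, and hence with $(-1)^{\dim X}\chi(X\setminus\sH)$, is formal.
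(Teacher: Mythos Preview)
The paper does not prove this theorem; it is quoted from Huh's Compositio paper \cite{Hu} and used as a black box, so there is no in-paper argument to compare your proposal against.

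As for the proposal itself, the overall architecture---embed $U=X\setminus\sH$ as a closed subvariety of a torus, interpret critical points of $l_u$ as zeros of the logarithmic $1$-form $\omega_u$, pass to a good compactification $(\overline{U},D)$, and invoke the identity $\int_{\overline{U}}c_d(\Omega^1_{\overline{U}}(\log D))=(-1)^d\chi(U)$---is indeed the right shape for such results and is close in spirit to one of the approaches in the literature. However, the step you flag as the ``main obstacle'' is a genuine gap, not just a technicality. The assertion that ``the vanishing orders of the $y_i$ along the boundary divisors are not all proportional'' is neither clearly true for an arbitrary smooth compactification nor sufficient: the residue of $\omega_u$ along a boundary component $D_k$ is the linear form $\sum_i u_i\,\mathrm{ord}_{D_k}(y_i)$, and its nonvanishing for generic $u$ only tells you $\omega_u$ does not vanish \emph{generically} on $D_k$; it does not rule out isolated boundary zeros, nor does it handle deeper strata of $D$. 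Controlling this requires either choosing the compactification carefully (e.g.\ via a toric compactification adapted to the tropicalization of $U$, so that boundary strata correspond to cones on which the residue data are well behaved) or bypassing compactifications altogether, as Huh does, using characteristic cycles and the Franecki--Kapranov theorem for constructible sheaves on semiabelian varieties. Without one of these ingredients your boundary argument does not close.
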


Since the Euler characteristic is additive for algebraic varieties, by the inclusion-exclusion principle,
\begin{equation}\label{inc}
\chi(X\setminus\sH)=\sum_{0\leq i\leq n}\;\sum_{\substack{\Lambda'\subset \Lambda\\ \left\vert \Lambda'\right\vert=i}}(-1)^i\chi(X\cap H_{\Lambda'})
\end{equation}
where $H_{\Lambda'}=\bigcap_{\lambda\in \Lambda'}H_\lambda$. 

The Fermat hypersurface $F_{n, d}=\{x_0^d+x_1^d+\cdots+x_n^d=0\}$ is invariant under any permutation of the coordinates. Therefore, (\ref{inc}) can be written as
\begin{equation}\label{total}
\chi(F_{n, d}\setminus \sH)=\sum_{0\leq i\leq n}(-1)^i\left({n+1 \choose i}\chi(F_{n, d}\cap V^i)+{n+1 \choose i-1}\chi(F_{n, d}\cap W^i)\right)
\end{equation}
where $V^i=\bigcap_{0\leq j\leq i-1} H_j$ and $W^i=H_+\cap \bigcap_{0\leq j\leq i-2}H_j$ ($W^0=\emptyset$ and $W^1=H_+$). 

$F_{n, d}\cap V^i$ is a smooth hypersurface in $\pp^{n-i}$ of degree $d$. Euler characteristics of such hypersurfaces only depend on $n-i$ and $d$, and they are calculated in \cite{D} Chapter 5, (3.7). However, it turns out that we don't have to compute each of these Euler characteristics. For now, we simply denote the Euler characteristic of a smooth degree $d$ hypersurfaces in $\pp^m$ by $e_{m, d}$. In particular,
\begin{equation}\label{smooth}
\chi(F_{n,d}\cap V^i)=e_{n-i, d}.
\end{equation}

$F_{n, d}\cap W^i$ is a possibly singular hypersurface in $W^i$ for $1\leq i\leq n$. In fact, $F_{n, d}\cap W^i$ is isomorphic to the intersection of the Fermat hypersurface $F_{n-i+1, d}\subset \pp^{n-i+1}$ and the hyperplane $\{x_0+x_1+\cdots+x_{n-i+1}=0\}$. Using Lagrange multiplier method, one can easily see that all the singular points of $F_{n, d}\cap W^i$ are isolated and there are exactly $\beta_{n-i+1, d-1}$ many of them. The Euler characteristics of such hypersurfaces can be computed using Milnor numbers. 
\begin{theorem}{\cite[Chapter 5 (4.4)]{D}}
For any singular point $P$ of $F_{n, d}\cap W^i$ we can define the Milnor number $\mu(F_{n, d}\cap W^i, P)$ by considering $F_{n, d}\cap W^i$ as a hypersurface of $W^i$. Then, 
\begin{equation}\label{sing}
\chi(F_{n, d}\cap W^i)=e_{n-i, d}+ (-1)^{n-i}\sum_{P}\mu(F_{n, d}\cap W^i, P)
\end{equation}
where the sum is over all the singular points $P$ of $F_{n, d}\cap W^i$.
\end{theorem}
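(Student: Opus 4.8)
The plan is to compare the possibly singular hypersurface $V_0 := F_{n,d}\cap W^i$ with a nearby smooth hypersurface and to localize the difference of Euler characteristics at the isolated singular points. Since $V_0$ is a degree $d$ hypersurface in $W^i\cong\pp^{n-i}$ whose singular set $\Sigma=\{P\}$ is finite (by the Lagrange multiplier computation above), Bertini's theorem lets me choose a smoothing: a one-parameter family $\{V_t\}$ of degree $d$ hypersurfaces in $W^i$ with $V_t$ smooth for all small $t\neq 0$ and $V_0$ the given member. In particular $\chi(V_t)=e_{n-i,d}$, the Euler characteristic of a smooth degree $d$ hypersurface in $\pp^{n-i}$, so the theorem reduces to showing $\chi(V_0)-\chi(V_t)=(-1)^{n-i}\sum_P\mu(V_0,P)$.

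Next I would analyze the local picture. For each $P\in\Sigma$ I choose a Milnor ball $B_P$ (and take $t$ small relative to its radius), the $B_P$ pairwise disjoint. By Milnor's conic structure theorem, $V_0\cap B_P$ is the cone over the link $L_P$, hence contractible, so $\chi(V_0\cap B_P)=1$. By the Milnor fibration theorem applied to the local defining equation $f\colon(\cc^{n-i},P)\to(\cc,0)$ of $V_0$, the piece $V_t\cap B_P$ is diffeomorphic to the Milnor fiber of $f$, which is homotopy equivalent to a wedge of $\mu(V_0,P)$ spheres of the middle dimension $n-i-1$; therefore $\chi(V_t\cap B_P)=1+(-1)^{n-i-1}\mu(V_0,P)$. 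On the common boundary the two families agree: $V_0\cap\partial B_P=V_t\cap\partial B_P=L_P$.

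Finally I would globalize using additivity of the Euler characteristic. Away from the singular points the family $\{V_t\}$ is a proper submersion onto the parameter disk, so by Ehresmann's fibration theorem $V_0\setminus\bigcup_P\mathrm{int}\,B_P$ and $V_t\setminus\bigcup_P\mathrm{int}\,B_P$ are diffeomorphic and have equal Euler characteristic. Cutting $V_0$ and $V_t$ along the common boundaries $L_P$ and applying inclusion--exclusion, the exterior contributions cancel and the link contributions cancel, leaving
\[
\chi(V_0)-\chi(V_t)=\sum_P\bigl(\chi(V_0\cap B_P)-\chi(V_t\cap B_P)\bigr).
\]
Each local term equals $1-\bigl(1+(-1)^{n-i-1}\mu(V_0,P)\bigr)=(-1)^{n-i}\mu(V_0,P)$, so $\chi(V_0)-\chi(V_t)=(-1)^{n-i}\sum_P\mu(V_0,P)$, and substituting $\chi(V_t)=e_{n-i,d}$ yields (\ref{sing}).

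The main obstacle is the construction and control of the smoothing: I must guarantee a family $\{V_t\}$ that is simultaneously smooth for $t\neq 0$ and topologically locally trivial over $t$ away from $\Sigma$, so that Ehresmann's theorem applies on the exterior and the restriction to each $B_P$ realizes precisely the Milnor fiber of the local equation. This is exactly where the two standing hypotheses are used: that the singularities are isolated, which forces the Milnor fiber to have the homotopy type of a bouquet of middle-dimensional spheres and keeps the correction a finite sum of Milnor numbers, and that generic degree $d$ hypersurfaces in $\pp^{n-i}$ are smooth, which supplies the transverse smoothing with $\chi(V_t)=e_{n-i,d}$.
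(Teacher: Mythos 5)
Your argument is correct: this is the standard smoothing-and-localization proof of the formula, and it is essentially the argument behind the cited reference (Dimca, Chapter 5, Section 4). The paper itself offers no proof of this statement --- it is quoted as a known theorem --- so there is nothing to compare against beyond noting that your local count $\chi(V_t\cap B_P)=1+(-1)^{n-i-1}\mu(V_0,P)$ and the resulting sign $(-1)^{n-i}$ match the statement exactly. The only cosmetic imprecision is the assertion that $V_0\cap\partial B_P$ and $V_t\cap\partial B_P$ are literally equal; for small $t\neq 0$ they are only isotopic, which is all the inclusion--exclusion step requires (and their Euler characteristics vanish in any case, being odd-dimensional closed manifolds).
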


\begin{prop}\label{num}
For any singular point $P$ of $F_{n, d}\cap W^i$, 
\begin{equation}\label{milnor}
\mu(F_{n, d}\cap W^i, P)=1.
\end{equation}
\end{prop}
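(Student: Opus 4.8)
The plan is to show that each such point is an ordinary node, i.e.\ a nondegenerate (Morse) critical point of the local defining function; since an $A_1$ singularity has Milnor number $1$, this yields \eqref{milnor}. By the identification recalled just before the proposition, it suffices to treat the model $F_{m,d}\cap H$, where $m=n-i+1$, $F_{m,d}=\{y_0^d+\cdots+y_m^d=0\}\subset\pp^m$ and $H=\{y_0+\cdots+y_m=0\}$, regarded as a hypersurface of $H\cong\pp^{m-1}$.

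First I would pin down the coordinates of $P$. The Lagrange multiplier computation forces $y_j^{d-1}$ to be a common nonzero constant for all $j$, so after rescaling I may assume $y_j^{d-1}=1$ for every $j$ and normalize $y_0=1$; the hyperplane equation then reads $y_1+\cdots+y_m+1=0$, which is exactly the system counted by $\beta_{m,d-1}$. Passing to the affine chart $y_0=1$ and solving $y_1=-1-(y_2+\cdots+y_m)$, the functions $(y_2,\dots,y_m)$ are local coordinates on $H$ near $P$, and the hypersurface is cut out by
$$g(y_2,\dots,y_m)=1+\Big(-1-\sum_{k=2}^m y_k\Big)^{\!d}+\sum_{k=2}^m y_k^d.$$

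Next I would compute the derivatives of $g$. Writing $s=\sum_{k\ge 2}y_k$ so that $y_1=-1-s$, one finds $\partial g/\partial y_k=-d\,y_1^{d-1}+d\,y_k^{d-1}$, which vanishes at $P$ because all $y_j^{d-1}=1$; this reconfirms that $P$ is a critical point. Differentiating once more gives the Hessian entries
$$\frac{\partial^2 g}{\partial y_k\,\partial y_l}\Big|_P=d(d-1)\big(y_1^{d-2}+y_k^{d-2}\delta_{kl}\big),\qquad k,l\in\{2,\dots,m\},$$
so the Hessian is $d(d-1)\big(D+a\,\bone\bone^{\!\top}\big)$ with $D=\mathrm{diag}(y_2^{d-2},\dots,y_m^{d-2})$, $a=y_1^{d-2}$, and $\bone$ the all-ones vector.

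The crux is then to verify that this matrix is invertible, and here I would exploit the two defining relations simultaneously: from $y_j^{d-1}=1$ we get $y_j^{-(d-2)}=y_j$, whence $a=y_1^{-1}$ and $D^{-1}$ has diagonal entries $y_k$. The matrix determinant lemma gives
$$\det\big(D+a\,\bone\bone^{\!\top}\big)=\det(D)\Big(1+a\sum_{k=2}^m y_k^{-(d-2)}\Big)=\det(D)\,\big(1+y_1^{-1}s\big),$$
and substituting $s=-1-y_1$ collapses the bracket to $1+y_1^{-1}(-1-y_1)=-y_1^{-1}\neq 0$. Since $\det(D)=\prod_{k\ge 2}y_k^{d-2}\neq 0$ and $d\ge 2$, the Hessian is nondegenerate, so $P$ is a node and $\mu(F_{n,d}\cap W^i,P)=1$. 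I expect the only real obstacle to be this determinant step: recognizing the simplification $y_j^{-(d-2)}=y_j$ and the cancellation it triggers is what turns an a priori messy $(m-1)\times(m-1)$ determinant into the one-line computation above.
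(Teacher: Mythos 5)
Your argument is correct, and it reaches the same destination as the paper by a slightly different road. The paper invokes the identity $\mu(f^{-1}(0),0)=\dim_\cc\sO/J_f$ and shows via Nakayama's lemma that the Jacobian ideal is the full maximal ideal, whereas you show directly that $P$ is a nondegenerate ($A_1$, Morse) critical point of the local equation and quote $\mu(A_1)=1$; both versions come down to the invertibility of the same matrix $\mathrm{diag}(y_2^{d-2},\dots,y_m^{d-2})+y_1^{d-2}\,\mathbf{1}\mathbf{1}^{\top}$ (the linearization of the partial derivatives, which is the Hessian up to the harmless factor $d(d-1)$). Your treatment of this linear-algebra step is in fact the more careful one: in the paper's displayed linear part of $\partial f/\partial z_j$, the off-diagonal term carries the coefficient $\xi_j^{d-2}$ where it should be $\xi_{n-i+1}^{d-2}$, which makes every row a multiple of $(1,\dots,1,2,1,\dots,1)$ and renders the span argument immediate; with the correct coefficients one needs exactly the computation you perform, namely the matrix determinant lemma combined with the relations $y_j^{-(d-2)}=y_j$ and $\sum_{k\ge 2}y_k=-1-y_1$, giving the nonzero determinant $-y_1^{-1}\prod_{k\ge 2}y_k^{d-2}$. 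The only caveat is the implicit hypothesis $d\ge 2$ (needed so that $d(d-1)\neq 0$), which is harmless since for $d=1$ the intersection is a linear space and the statement is vacuous.
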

We will postpone the proof of the proposition to next section. The next corollary follows immediately from (\ref{sing}) and (\ref{milnor}). 
\begin{cor}
\begin{equation}\label{sss}
\chi(F_{n, d}\cap W^i)=e_{n-i, d}+(-1)^{n-i}\beta_{n-i+1, d-1}. 
\end{equation}
\end{cor}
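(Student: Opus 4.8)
The plan is to combine the three ingredients already assembled in this section: the Euler-characteristic formula (\ref{sing}) coming from Dimca's computation, the count of singular points obtained via the Lagrange-multiplier method, and the Milnor-number computation of Proposition \ref{num}. Since all three are at our disposal, deriving (\ref{sss}) amounts to a bookkeeping step rather than a genuinely new argument.

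First I would recall that (\ref{sing}) expresses $\chi(F_{n,d}\cap W^i)$ as the Euler characteristic $e_{n-i,d}$ of a smooth degree-$d$ hypersurface in $\pp^{n-i}$, corrected by the signed sum $(-1)^{n-i}\sum_{P}\mu(F_{n,d}\cap W^i, P)$ taken over the singular points $P$ of $F_{n,d}\cap W^i$. Proposition \ref{num} asserts that each of these Milnor numbers equals $1$, so the sum $\sum_{P}\mu(F_{n,d}\cap W^i, P)$ collapses to the mere cardinality of the singular locus.

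Next I would invoke the count established earlier in this section: through the analysis of the isomorphic model $F_{n,d}\cap W^i\cong F_{n-i+1,d}\cap\{x_0+\cdots+x_{n-i+1}=0\}$ by Lagrange multipliers, the singular points are isolated and number exactly $\beta_{n-i+1,d-1}$. Combining this with the previous paragraph gives $\sum_{P}\mu(F_{n,d}\cap W^i, P)=\beta_{n-i+1,d-1}$, and substituting this value into (\ref{sing}) yields (\ref{sss}) at once.

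There is no substantive obstacle remaining once Proposition \ref{num} and the singular-point count are granted; the real work lies in those two inputs, which are treated separately. The only point worth stressing is that the validity of (\ref{sing}) presupposes that the singularities are isolated, which is precisely what the Lagrange-multiplier argument guarantees. This isolatedness is what makes the Milnor number well defined at each $P$ and the correction term in Dimca's formula a genuine finite sum, so that replacing each summand by $1$ and then counting is fully legitimate.
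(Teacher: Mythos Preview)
Your proposal is correct and matches the paper's own reasoning exactly: the corollary is stated as an immediate consequence of (\ref{sing}) and (\ref{milnor}), together with the earlier observation that the singular locus consists of precisely $\beta_{n-i+1,d-1}$ isolated points. There is nothing to add.
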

Now, combining (\ref{total}), (\ref{smooth}) and (\ref{sss}), we have
\begin{equation}\label{total2}
\chi(F_{n, d}\setminus \sH)=\sum_{0\leq i\leq n}(-1)^i\left({n+1 \choose i}e_{n-i, d}+{n+1 \choose i-1}\left(e_{n-i, d}+(-1)^{n-i}\beta_{n-i+1, d-1}\right)\right)
\end{equation}
Since 
$
{n+1\choose i}+{n+1\choose i-1}={n+2\choose i},
$
(\ref{total2}) is equivalent to
\begin{equation}\label{close}
\chi(F_{n, d}\setminus \sH)=\sum_{0\leq i\leq n}(-1)^i{n+2 \choose i}e_{n-i, d}+\sum_{1\leq i\leq n}(-1)^{n}{n+1 \choose i-1}\beta_{n-i+1, d-1}.
\end{equation}
Suppose $X$ is a general hypersurface of degree $d$ in $\pp^n$. Then (\ref{inc}) implies that
\begin{equation}\label{irene}
\chi(X\setminus \sH)=\sum_{0\leq i\leq n}(-1)^{i}{n+2\choose i}e_{n-i, d}
\end{equation}
The maximum likelihood degree of a general hypersurfaces is well-understood. 
\begin{prop}{\cite[1.11]{HS}}
The maximum likelihood of a general degree $d$ hypersurfaces in $\pp^n$ is equal to $d+d^2+\cdots+d^n$. 
\end{prop}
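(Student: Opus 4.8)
The plan is to combine Huh's theorem (Theorem~\ref{Huh}) with the inclusion--exclusion formula that has already been set up, thereby reducing the statement to a single binomial identity. The first step is to justify that formula (\ref{irene}) genuinely applies to a general $X$. Since the $n+2$ hyperplanes in $\sH$ are in general position, every nonempty stratum $H_{\Lambda'}=\bigcap_{\lambda\in\Lambda'}H_\lambda$ with $\left\vert\Lambda'\right\vert=i\le n$ is a linear space $H_{\Lambda'}\cong\pp^{\,n-i}$. Restricting the defining form of $X$ to $H_{\Lambda'}$ produces a general degree $d$ form there (restriction of forms is linear and surjective), so for a general $X$ each intersection $X\cap H_{\Lambda'}$ is simultaneously a \emph{smooth} degree $d$ hypersurface in $\pp^{\,n-i}$ and $X\setminus\sH$ is smooth. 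Hence $\chi(X\cap H_{\Lambda'})=e_{n-i,d}$ depends only on $i$, there are $\binom{n+2}{i}$ such strata, and (\ref{inc}) collapses to (\ref{irene}). Huh's theorem then gives $\MLdeg(X)=(-1)^{n-1}\chi(X\setminus\sH)$, so it remains to evaluate the alternating sum in (\ref{irene}).

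Next I would record the closed form for the smooth Euler characteristics. From the Chern class computation $c(T_V)=(1+h)^{m+1}/(1+dh)$ for a smooth degree $d$ hypersurface $V\subset\pp^m$ (equivalently \cite[Ch.~5 (3.7)]{D}), extracting the top Chern number gives
\begin{equation*}
e_{m,d}=\frac{(1-d)^{m+1}-1}{d}+(m+1),
\end{equation*}
which is also valid at $m=0$, where it yields $e_{0,d}=0$. Substituting this into (\ref{irene}) splits $\chi(X\setminus\sH)$ as $\tfrac1d A-\tfrac1d B+C$, where $A=\sum_{i=0}^n(-1)^i\binom{n+2}{i}(1-d)^{n-i+1}$ is the ``geometric'' part, $B=\sum_{i=0}^n(-1)^i\binom{n+2}{i}$, and $C=\sum_{i=0}^n(-1)^i\binom{n+2}{i}(n-i+1)$.

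The heart of the argument is the evaluation of these three sums. The pieces $B$ and $C$ follow from the partial alternating identity $\sum_{i=0}^{k}(-1)^i\binom{N}{i}=(-1)^k\binom{N-1}{k}$ and the absorption rule $i\binom{N}{i}=N\binom{N-1}{i-1}$, giving $B=(-1)^n(n+1)$ and $C=(-1)^n$. For $A$ I would recognize it as the truncation of the binomial expansion of $\bigl((1-d)-1\bigr)^{n+2}=(-d)^{n+2}$: restoring the two missing top-degree terms ($i=n+1,n+2$) and dividing by $1-d$ produces a factor $d^{n+2}-(n+2)d+(n+1)$, which vanishes at $d=1$ and hence factors as $(d-1)\bigl(d^{n+1}+\cdots+d-(n+1)\bigr)$. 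This yields $A=(-1)^{n+1}\bigl(d^{n+1}+\cdots+d-(n+1)\bigr)$. Assembling $\tfrac1d A-\tfrac1d B+C$, the non-polynomial terms $\mp(n+1)/d$ cancel between $\tfrac1d A$ and $-\tfrac1d B$, and the constant $C=(-1)^n$ cancels the stray $(-1)^{n+1}\cdot 1$, leaving $\chi(X\setminus\sH)=(-1)^{n+1}(d+d^2+\cdots+d^n)$. Multiplying by $(-1)^{n-1}$ gives $\MLdeg(X)=d+d^2+\cdots+d^n$.

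The main obstacle is the bookkeeping in the evaluation of $A$: one must track the two truncated top-degree terms exactly and verify the divisibility by $d-1$, so that the $1/d$ and $1/(1-d)$ denominators cancel and a clean polynomial survives. An alternative that avoids the closed form for $e_{m,d}$ would be a single coefficient-extraction argument via $e_{m,d}=d\,[h^{m-1}]\,(1+h)^{m+1}/(1+dh)$, but because $m$ appears both in the exponent and in the extraction order, the coupling makes the direct three-sum evaluation above the more transparent route.
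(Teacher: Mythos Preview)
Your argument is correct, but it takes a genuinely different route from the paper. The paper does not prove this proposition at all: it is quoted as a black-box input from \cite[1.11]{HS}, and is then combined with Huh's theorem and (\ref{irene}) to evaluate the alternating sum $\sum_i(-1)^i\binom{n+2}{i}e_{n-i,d}$ \emph{indirectly}. You invert this logic: you evaluate that sum directly by inserting the closed form $e_{m,d}=\tfrac{(1-d)^{m+1}-1}{d}+(m+1)$ and reducing to three elementary binomial identities, then read off the ML degree via Theorem~\ref{Huh}. What you gain is a fully self-contained argument that avoids any appeal to the likelihood-geometry machinery of \cite{HS}; the price is the mildly fiddly (but entirely routine) cancellation among $A$, $B$, and $C$, which you carry out correctly. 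You also make explicit a point the paper passes over, namely that the $n+2$ hyperplanes of $\sH$ are in linearly general position, so each nonempty stratum $H_{\Lambda'}$ is a $\pp^{\,n-i}$ and a general $X$ restricts to a smooth degree $d$ hypersurface on every stratum simultaneously (including the degenerate case $i=n$, where $e_{0,d}=0$ matches the empty intersection).
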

Combining the proposition, (\ref{irene}) and Theorem \ref{Huh}, we have 
\begin{equation}
\begin{split}
\sum_{0\leq i\leq n}(-1)^{i}{n+2\choose i}e_{n-i, d}&=\chi(X\setminus \sH)\\
&=(-1)^{n-1}\MLdeg(X)\\
&=(-1)^{n-1}(d+d^2+\cdots+d^n)
\end{split}
\end{equation}
Therefore, (\ref{close}) is equivalent to
\begin{equation}
\chi(F_{n, d}\setminus \sH)=(-1)^{n-1}(d+d^2+\cdots+d^n)+\sum_{1\leq i\leq n}(-1)^n{n+1\choose i-1}\beta_{n-i+1, d-1}
\end{equation}
Again, by Theorem \ref{Huh}, we have 
\begin{equation}
\MLdeg(F_{n, d})=d+d^2+\cdots+d^n-\sum_{1\leq i\leq n}{n+1\choose i-1}\beta_{n-i+1, d-1}
\end{equation}
which is the statement of Theorem \ref{thm}. 

\section{The Milnor numbers}\label{three}
We prove Proposition \ref{num} in this section. 

For the geometric meaning of Milnor number, we refer to \cite[Chapter 3]{D}. Here we compute the Milnor numbers using Jacobian ideals. Denote the ring of germs of holomorphic functions at $0\in \cc^l$ by $\sO$. Let $f\in \sO$ be a nonzero germ of holomorphic function such that the germ of hypersurface $f^{-1}(0)$ has an isolated singularity at the origin $0\in \cc^l$. The Jacobian ideal of $f$, denoted by $J_f$ is defined by
$$J_f=(\frac{\partial f}{\partial z_1}, \cdots, \frac{\partial f}{\partial z_l})\subset \sO$$
where $z_1, \ldots, z_l$ are the coordinates of $\cc^n$. 
\begin{theorem}{\cite[Chapter 3, (2.7)]{D}}\label{1st}
The Milnor number of $f^{-1}(0)$ at the origin, denoted by $\mu(f^{-1}(0), 0)$, is given by the formula
\begin{equation}
\mu(f^{-1}(0), 0)=\dim_\cc \sO/J_f. 
\end{equation}
\end{theorem}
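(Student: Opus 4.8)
The plan is to prove that at every singular point $P$ of $F_{n,d}\cap W^i$ the Hessian of a local defining function is nonsingular, and then to deduce $\mu=1$ as follows. If $f$ has a critical point at $P$ with nondegenerate Hessian, then the linear parts of the partial derivatives $\partial f/\partial x_0,\dots,\partial f/\partial x_{m-2}$ are exactly the rows of the Hessian, hence linearly independent; therefore these partials span $\mathfrak m_P/\mathfrak m_P^2$ and, by Nakayama, generate the maximal ideal $\mathfrak m_P$. Thus $J_f=\mathfrak m_P$ and $\mu(F_{n,d}\cap W^i,P)=\dim_\cc\mathcal O/J_f=1$ by Theorem \ref{1st}. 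Using the isomorphism recorded above, I work with $Y=\{g=0\}\cap\{h=0\}\subset\pp^m$, where $m=n-i+1$, $g=\sum_{j=0}^m x_j^d$ and $h=\sum_{j=0}^m x_j$, viewing $Y$ as a hypersurface of the hyperplane $\{h=0\}$. The Lagrange-multiplier condition $\nabla g\parallel\nabla h$ shows that the homogeneous coordinates $a_0,\dots,a_m$ of $P$ satisfy $a_j^{d-1}=c$ for all $j$ and a single constant $c$; as $c=0$ forces $P=0$, we have $c\neq0$ and every $a_j\neq0$. (There is nothing to prove unless a singular point exists, i.e. unless $\beta_{m,d-1}>0$, so I may assume $d\geq 3$.)

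Since all $a_j\neq0$, I would pass to the affine chart $x_m=1$ and solve $h=0$ for $x_{m-1}=-1-\sum_{j=0}^{m-2}x_j$, taking $x_0,\dots,x_{m-2}$ as coordinates on $\{h=0\}$. The local equation of $Y$ becomes
\[
f=\sum_{j=0}^{m-2}x_j^{\,d}+\Bigl(-1-\sum_{j=0}^{m-2}x_j\Bigr)^{\!d}+1 .
\]
Writing $t=-1-\sum_j x_j$ and using $\partial t/\partial x_j=-1$ gives $\partial f/\partial x_j=d\,(x_j^{\,d-1}-t^{\,d-1})$, which vanishes at $P$, and a second differentiation yields the $(m-1)\times(m-1)$ Hessian
\[
H=d(d-1)\bigl(D+a_{m-1}^{\,d-2}\,\mathbf1\mathbf1^{\mathsf T}\bigr),\qquad D=\operatorname{diag}\bigl(a_0^{\,d-2},\dots,a_{m-2}^{\,d-2}\bigr),
\]
a diagonal-plus-rank-one matrix, where $\mathbf1$ is the all-ones vector.

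The crux is to show $\det H\neq0$. I would invoke the matrix determinant lemma
\[
\det\bigl(D+\beta\,\mathbf1\mathbf1^{\mathsf T}\bigr)=\det(D)\,\bigl(1+\beta\,\mathbf1^{\mathsf T}D^{-1}\mathbf1\bigr),\qquad \beta=a_{m-1}^{\,d-2},
\]
where $\det(D)=\prod_{j}a_j^{\,d-2}\neq0$. The relation $a_j^{d-1}=c$ lets me rewrite $a_j^{d-2}=c/a_j$ and $a_j^{2-d}=a_j/c$, so the scalar factor simplifies: $\beta\,\mathbf1^{\mathsf T}D^{-1}\mathbf1=a_{m-1}^{-1}\sum_{j=0}^{m-2}a_j$, and then the linear relation $\sum_{j=0}^m a_j=0$ with $a_m=1$ gives $\sum_{j=0}^{m-2}a_j=-a_{m-1}-1$. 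Hence $1+\beta\,\mathbf1^{\mathsf T}D^{-1}\mathbf1=-a_{m-1}^{-1}\neq0$ and $H$ is nonsingular, completing the argument by the first paragraph. I expect the main obstacle to be orchestrating exactly this cancellation: neither the Fermat relation $a_j^{d-1}=c$ nor the linear relation $\sum a_j=0$ alone controls the correction term, and it is their interaction that collapses $\det H$ to $-[d(d-1)]^{m-1}\det(D)/a_{m-1}$.
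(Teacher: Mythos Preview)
A clarification is in order: Theorem~\ref{1st} is quoted from Dimca's book and carries no proof in the paper, so there is nothing to compare on that front. What you have actually written is a proof of Proposition~\ref{num} (that $\mu(F_{n,d}\cap W^i,P)=1$ at each singular point), which \emph{uses} Theorem~\ref{1st} as a black box. I will compare your argument with the paper's proof of Proposition~\ref{num}, which goes through Proposition~\ref{2nd}.

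Your argument is correct and follows the same overall strategy as the paper: both pass to the same affine chart, eliminate one coordinate via the linear relation, write down the same local defining function $f$, and conclude $\mu=1$ by showing $J_f=\mathfrak m_P$ via Nakayama's lemma and then invoking Theorem~\ref{1st}. The only genuine difference is in the linear-algebra step verifying that the linear parts of the $\partial f/\partial z_j$ span $\mathfrak m_P/\mathfrak m_P^2$. The paper argues directly that the vectors $z_1+\cdots+2z_j+\cdots+z_{n-i}$ span: summing them recovers a nonzero multiple of $z_1+\cdots+z_{n-i}$, and each $z_j$ then drops out by subtraction. You instead recognise the Hessian as a diagonal-plus-rank-one matrix $d(d-1)\bigl(D+a_{m-1}^{d-2}\mathbf 1\mathbf 1^{\mathsf T}\bigr)$ and compute its determinant in closed form via the matrix determinant lemma, exploiting both $a_j^{d-1}=c$ and $\sum a_j=0$ to collapse the correction term to $-a_{m-1}^{-1}$. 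Your route yields the exact value of $\det H$ and makes transparent how the two relations interact to force nondegeneracy; the paper's route is a bare-hands row manipulation. Substantively the two proofs are the same.
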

Recall that $W^i=\{x_0=x_1=\cdots=x_{i-2}=x_0+x_1+\cdots+x_n=0\}\subset \pp^n$. Denote $y_j=x_{i-1+j}$, $0\leq j\leq n-i+1$. Then the intersection $F_{n, d}\cap W^i$ is isomorphic to the intersection
$$\{y_0^d+y_1^d+\cdots+y_{n-i+1}^d=0\}\cap \{y_0+y_1+\cdots+y_{n-i+1}=0\}$$
in $\pp^{n-i+1}$. Without lost of generality, we can work on the affine space $y_0\neq 0$, and rewrite the intersection in affine coordinates
$$\{1+\bar{y}_1^d+\cdots+\bar{y}_{n-i+1}^d=0\}\cap \{1+\bar{y}_1+\cdots+\bar{y}_{n-i+1}=0\}.$$
Here we use $\bar{y}_j$ to denote the corresponding affine coordinate of $y_j$, that is, $\bar{y}_j=y_j/y_0$. Suppose $(\xi_1, \ldots, \xi_{n-i+1})$ is a singular point of the above intersection. Then by Lagrange multiplier method,
\begin{equation}
\xi_1^{d-1}=\xi_2^{d-1}=\cdots=\xi_{n-i+1}^{d-1}=1.
\end{equation}

We can eliminate $\bar{y}_{n-i+1}$ by $\bar{y}_{n-i+1}=1-\bar{y_1}-\cdots-\bar{y}_{n-i}$. On this affine chart, $F_{n, d}\cap W^i$ is isomorphic to the hypersurface $\{f=0\}$ in $\cc^{n-i}$, where
\begin{equation}
f=1+\bar{y}_1^d+\cdots+\bar{y}_{n-i}^d+(1-\bar{y}_1-\cdots-\bar{y}_{n-i})^d.
\end{equation}
Let $z_j=\bar{y}_j-\xi_j$. Then 
\begin{equation}
f=1+(z_1+\xi_1)^d+\cdots+(z_{n-i}+\xi_{n-i})^d+(\xi_{n-i+1}-z_1-\cdots-z_{n-i})^d.
\end{equation}

\begin{prop}\label{2nd}
In the local ring $\sO$, the Jacobian ideal $J_f=(\frac{\partial f}{\partial z_1}, \ldots, \frac{\partial f}{\partial z_{n-i}})$ is equal to the maximal ideal $(z_1, z_2, \ldots, z_{n-i})$. 
\end{prop}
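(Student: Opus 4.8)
The plan is to show that the origin is a \emph{nondegenerate} (Morse) critical point of $f$, and then read off $J_f=(z_1,\dots,z_{n-i})$ from this. Writing $\mathfrak{m}=(z_1,\dots,z_{n-i})$ for the maximal ideal of $\sO$, each generator $\partial f/\partial z_p$ lies in $\mathfrak{m}$ precisely because $0$ is a critical point, so the content of the proposition is the reverse inclusion $\mathfrak{m}\subseteq J_f$. If the Hessian of $f$ at $0$ is invertible, then the linear parts of $\partial f/\partial z_1,\dots,\partial f/\partial z_{n-i}$ are linearly independent, hence their classes form a basis of $\mathfrak{m}/\mathfrak{m}^2$; Nakayama's lemma then upgrades this to $J_f=\mathfrak{m}$. (This also gives $\dim_\cc\sO/J_f=1$, reproving Proposition~\ref{num}.) So the whole problem reduces to the nonvanishing of one determinant.

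Set $m=n-i$ and $L=\xi_{n-i+1}-z_1-\cdots-z_m$, so $f=1+\sum_{k=1}^m(z_k+\xi_k)^d+L^d$. First I would compute $\partial f/\partial z_p=d(z_p+\xi_p)^{d-1}-dL^{d-1}$; at $z=0$ this equals $d\xi_p^{d-1}-d\xi_{n-i+1}^{d-1}=0$ by the relation $\xi_j^{d-1}=1$, confirming $J_f\subseteq\mathfrak{m}$. Differentiating again, the Hessian at $0$ is $H=d(d-1)(D+c\,\mathbf{1}\mathbf{1}^{\mathsf{T}})$, where $D=\mathrm{diag}(\xi_1^{d-2},\dots,\xi_m^{d-2})$, $c=\xi_{n-i+1}^{d-2}$, and $\mathbf{1}$ is the all-ones vector. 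Using $\xi_j^{d-1}=1$ once more I would simplify every exponent $d-2$ to $-1$, i.e. $\xi_j^{d-2}=\xi_j^{-1}$.

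The computational heart is to show $\det H\neq 0$, equivalently $\det(D+c\,\mathbf{1}\mathbf{1}^{\mathsf{T}})\neq 0$. Here I would invoke the rank-one update (matrix determinant lemma): $\det(D+c\,\mathbf{1}\mathbf{1}^{\mathsf{T}})=\det(D)(1+c\,\mathbf{1}^{\mathsf{T}}D^{-1}\mathbf{1})$. After the substitution $\xi_j^{d-2}=\xi_j^{-1}$ one has $D^{-1}=\mathrm{diag}(\xi_1,\dots,\xi_m)$ and $c=\xi_{n-i+1}^{-1}$, so the scalar factor becomes $1+\xi_{n-i+1}^{-1}\sum_{k=1}^{m}\xi_k=\xi_{n-i+1}^{-1}\sum_{j=1}^{n-i+1}\xi_j$. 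Thus nondegeneracy is equivalent to $\sum_{j=1}^{n-i+1}\xi_j\neq 0$, and this is forced by the linear equation cutting out the singular point: the hyperplane relation $1+\xi_1+\cdots+\xi_{n-i+1}=0$ gives $\sum_j\xi_j=-1\neq 0$ (consistently, the Fermat relation together with $\xi_j^d=\xi_j\cdot\xi_j^{d-1}=\xi_j$ yields the same value).

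The step I expect to demand the most care is this determinant reduction: getting the rank-one bookkeeping right and, above all, correctly identifying the surviving scalar as a nonzero multiple of $\sum_j\xi_j$, whose nonvanishing is exactly where the defining equations of the singular point re-enter. Once $\det H\neq 0$ is established, the passage to $J_f=\mathfrak{m}$ is a routine Nakayama argument. If one preferred to avoid the Hessian entirely, an alternative would be to combine the generators $\partial f/\partial z_p-\partial f/\partial z_q$ and their linear terms to solve for each $z_k$ modulo $\mathfrak{m}^2$ directly, but the Morse-point computation is cleaner and exhibits transparently the two ingredients that make it work, namely $\xi_j^{d-1}=1$ and $\sum_j\xi_j\neq 0$.
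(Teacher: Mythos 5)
Your proof is correct, and it follows the same overall strategy as the paper (reduce to showing the linear parts of the $\partial f/\partial z_j$ span $\mathfrak{m}/\mathfrak{m}^2$, then apply Nakayama), but the execution differs in a way worth noting. The paper writes out the linear part of each partial explicitly and exhibits a spanning combination by summing the resulting vectors; you instead package the same information as the Hessian $H=d(d-1)\bigl(D+c\,\mathbf{1}\mathbf{1}^{\mathsf{T}}\bigr)$ and kill it with the matrix determinant lemma. Your computation is the more careful of the two: the correct linear part of $\partial f/\partial z_j$ is $d(d-1)\bigl(\xi_j^{d-2}z_j+\xi_{n-i+1}^{d-2}(z_1+\cdots+z_{n-i})\bigr)$, whereas the paper's displayed formula puts $\xi_j^{d-2}$ on both terms, i.e.\ it tacitly identifies $\xi_{n-i+1}^{d-2}$ with $\xi_j^{d-2}$; as a consequence the paper's printed argument never invokes the hyperplane relation $1+\xi_1+\cdots+\xi_{n-i+1}=0$ at all, even though (as your determinant computation makes transparent) the nonvanishing of the scalar factor $1+c\,\mathbf{1}^{\mathsf{T}}D^{-1}\mathbf{1}=\xi_{n-i+1}^{-1}\sum_{j=1}^{n-i+1}\xi_j=-\xi_{n-i+1}^{-1}$ is exactly where that relation is needed, since the Hessian genuinely degenerates when $\sum_j\xi_j=0$. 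So your route buys a correct identification of the essential hypothesis and, as a bonus, directly recovers Proposition~\ref{num} via $\dim_\cc\sO/J_f=1$; the paper's route, once the coefficient is fixed (multiply the $j$-th linear form by $\xi_j$ before summing), yields the same spanning argument with slightly less machinery. The only implicit assumption in both arguments is $d\geq 2$, which is harmless since for $d=1$ there are no singular points to consider.
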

\begin{proof}
Notice that $\xi_j^{d-1}=1$ for all $1\leq j\leq n-i+1$. Therefore, 
\begin{equation*}
\begin{split}
\frac{\partial f}{\partial z_j}&=\frac{d(d-1)}{2}\cdot \xi_j^{d-2}z_j+\frac{d(d-1)}{2}\cdot \xi_j^{d-2} (z_1+\cdots+z_{n-i})+\text{higher degree terms}\\
&=\frac{d(d-1)}{2}\cdot \xi_j^{d-2}(z_1+\cdots+z_{j-1}+2z_j+z_{j+1}+\cdots+z_{n-i})+\text{higher degree terms}
\end{split}
\end{equation*}

By Nakayama's lemma, we only need to show that the vectors $z_1+\cdots+z_{j-1}+2z_j+z_{j+1}+\cdots+z_{n-i}$, $1\leq j\leq n-i$ span the whole vector space $\cc z_1\oplus \cc z_2\oplus\cdots \oplus \cc z_{n-j}$. By adding all such vectors together, we see $z_1+z_2+\cdots+z_{n-i}$ is contained in their span. Thus 
$$z_j=(z_1+\cdots+z_{j-1}+2z_j+z_{j+1}+\cdots+z_{n-i})-(z_1+z_2+\cdots+z_{n-i})$$
 is in the span. 
\end{proof}
Now, Proposition  \ref{num} follows from Theorem \ref{1st} and Proposition \ref{2nd}. 

\section{The constants $\beta_{\mu,\nu}$}\label{four}
Instead of working with the constants $\beta_{\mu, \nu}$, we define $\alpha_{\mu, \nu}$ to be the number of complex solutions to the system of equations
\begin{equation}\label{system}
\begin{cases}
z_1^\nu=z_2^\nu=\cdots=z_\mu^\nu=1\\
z_1+z_2+\cdots+z_\mu=0. 
\end{cases}
\end{equation}
Then clearly $\beta_{\mu, \nu}=\frac{1}{\nu}\cdot\alpha_{\mu+1, \nu}$. The advantage of working with $\alpha_{\mu, \nu}$ is that their defining equations have better symmetry. 

We would like to answer the following question.
\begin{question}
Give a formula for $\alpha_{\mu, \nu}$ in terms of $\mu$ and the prime factorization of $\nu$. 
\end{question}
This is definitely a very hard question. The work of Lam and Leung gives a necessary and sufficient condition of $\alpha_{\mu, \nu}\neq 0$. 
\begin{theorem}{\cite{LL}}
Suppose $\nu=p_1^{a_1}\cdots p_l^{a_l}$ is the prime factorization. Then $\alpha_{\mu, \nu}\neq 0$ if and only if $\mu\in \zz_{\geq 0}\cdot p_1+\cdots+\zz_{\geq 0}\cdot p_l$. 
\end{theorem}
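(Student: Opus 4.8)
The plan is to recast the counting problem polynomially. Writing $\zeta=\zeta_\nu$ for a primitive $\nu$-th root of unity, a solution counted by $\alpha_{\mu,\nu}$ is the same data as nonnegative integers $c_0,\dots,c_{\nu-1}$ with $\sum_k c_k=\mu$ and $\sum_k c_k\zeta^k=0$, i.e.\ a polynomial $P(x)=\sum_k c_k x^k$ with nonnegative integer coefficients, $P(1)=\mu$, and $\Phi_\nu(x)\mid P(x)$ in $\zz[x]$ (by Gauss's lemma, since $\Phi_\nu$ is monic). Set $W(\nu)=\zz_{\geq 0}p_1+\cdots+\zz_{\geq 0}p_l$; the claim is $\alpha_{\mu,\nu}\neq 0\iff \mu\in W(\nu)$. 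I would dispose of \emph{sufficiency} first, as it is the easy construction: for each prime $p_i\mid\nu$ the element $\omega=\zeta^{\nu/p_i}$ is a primitive $p_i$-th root of unity, so $1+\omega+\cdots+\omega^{p_i-1}=0$ is a vanishing sum of exactly $p_i$ many $\nu$-th roots; concatenating $m_i$ such blocks for each $i$ yields a vanishing configuration of weight $\sum_i m_i p_i$, realizing every element of $W(\nu)$.

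For \emph{necessity} I would induct on the number $l$ of distinct prime factors of $\nu$. In the base case $\nu=p^a$ one has $\Phi_{p^a}(1)=p$, so writing $P=\Phi_{p^a}Q$ with $Q\in\zz[x]$ gives $\mu=P(1)=p\,Q(1)$; since $\mu\geq 0$ this forces $\mu\in\zz_{\geq 0}\,p=W(p^a)$. It is worth stressing that this naive evaluation argument \emph{fails} as soon as $l\geq 2$, since then $\Phi_\nu(1)=1$; hence the nonnegativity of the $c_k$ must enter essentially (over $\zz$, allowing signs, \emph{every} integer weight already occurs for $\nu=6$). For the inductive step I would fix a prime $p=p_l\mid\nu$, write $\nu=p\,\nu'$, and use the $\mathrm{CRT}$ splitting of the $\nu$-th roots into $p$-primary and prime-to-$p$ parts, equivalently the tower $\qq(\zeta_{\nu'})\subset\qq(\zeta_\nu)$, to organize $\sum_k c_k\zeta^k=0$ as a sum of coset-sums lying in $\qq(\zeta_{\nu'})$.

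The step then splits according to the ramification of $p$. If $p^2\mid\nu$, then $\{1,\zeta,\dots,\zeta^{p-1}\}$ is a $\qq(\zeta_{\nu'})$-basis of $\qq(\zeta_\nu)$, so each of the $p$ coset-sums must vanish \emph{separately}; each is then a nonnegative vanishing sum of $\nu'$-th roots, and since $\nu'$ has the same prime factors as $\nu$ the induction hypothesis places every partial weight in $W(\nu')=W(\nu)$, whence $\mu\in W(\nu)$. The genuinely hard case is the tamely ramified one, $p\parallel\nu$ (that is, $p\mid\nu$ but $p^2\nmid\nu$, so $\nu'=\nu/p$ is coprime to $p$ and has $l-1$ prime factors): here the relevant basis is $\{1,\zeta_p,\dots,\zeta_p^{p-2}\}$, and the relation $\zeta_p^{p-1}=-(1+\cdots+\zeta_p^{p-2})$ forces the $p$ coset-sums $T_0,\dots,T_{p-1}$ to be \emph{equal} as elements of $\qq(\zeta_{\nu'})$ rather than individually zero. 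The plan is to expand each $T_s$ in a fixed monomial basis of $\qq(\zeta_{\nu'})$, use the forced equalities to show the per-coset weights $w_s$ differ from a common value only by weights of $\nu'$-relations, and then exploit the nonnegativity constraints $c_{s,t}\geq 0$ \emph{simultaneously across all $p$ cosets} — via a minimum/averaging argument over $s$ — to pin the total weight $\mu=\sum_s w_s$ into $\zz_{\geq 0}\,p+W(\nu')=W(\nu)$. (In the toy case $\nu'=5$, $p=3$ this already works cleanly: one finds $\mu=3D+5A$ with $A\geq 0$, and the constraint $\min_s a_s\leq A/3$ forces $D\geq 0$, excluding e.g.\ weight $7$.)

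I expect this tamely ramified case to be the main obstacle. Controlling $\mu=\sum_s w_s$ using only that the $T_s$ coincide as field elements — not as multisets, whose weights genuinely differ by arbitrary integers — is exactly where the content of the theorem lives, and is precisely what makes the statement false over $\zz$. The delicate point is that for composite $\nu'$ the lattice of relations among $\nu'$-th roots is itself governed by the inductive statement, so the "minimum over cosets" bookkeeping cannot be treated as a black box and must be interleaved with the induction rather than invoked after it.
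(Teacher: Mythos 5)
The paper offers no proof of this statement --- it is imported verbatim from Lam and Leung \cite{LL} --- so there is no internal argument to compare against; I can only judge your proposal on its own terms. Most of it is correct: the reformulation via $\Phi_\nu(x)\mid P(x)$ and Gauss's lemma, the sufficiency construction by concatenating $p_i$-term subsums, the prime-power base case via $\Phi_{p^a}(1)=p$, and the wildly ramified reduction (when $p^2\mid\nu$, the coset sums $T_s$ vanish separately because $1,\zeta,\dots,\zeta^{p-1}$ is a $\qq(\zeta_{\nu'})$-basis). One organizational flaw: in that last step you apply the inductive hypothesis to $\nu'=\nu/p$, which has the \emph{same} number of distinct prime factors as $\nu$, so an induction ``on $l$'' as you set it up does not reach $\nu'$; you need strong induction on $\nu$ itself (bottoming out at prime powers and at squarefree $\nu$).

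The genuine gap is the tamely ramified step, and you have flagged it yourself without closing it. From $T_0=\cdots=T_{p-1}=\alpha$ in $\zz[\zeta_{\nu'}]$ you must deduce that $\mu=\sum_s w_s$ lies in $\zz_{\geq 0}\,p+W(\nu')$, where $w_s$ is the weight of the \emph{given} nonnegative representation of $T_s$. What this demands is a lemma about the set of weights of all nonnegative representations of a single fixed element $\alpha$ --- for instance that they all lie in $w_0+W(\nu')$ for one $w_0\geq 0$ --- and, as you correctly observe, the pairwise differences $w_s-w_{s'}$ are weights of \emph{signed} vanishing sums of $\nu'$-th roots, which for composite non-prime-power $\nu'$ can be any integer. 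Your ``minimum/averaging over cosets'' is checked only in the toy case $\nu=15$ and is explicitly deferred in general (``must be interleaved with the induction''), but this is exactly where the entire difficulty of the theorem is concentrated; Lam and Leung need substantially more machinery here (an analysis of the kernel of the evaluation map in the group ring reduced modulo $p$) than the linear-disjointness bookkeeping you have set up. As it stands, the necessity direction is proved only for prime-power $\nu$, and the proposal should be regarded as a plausible strategy with its central lemma unproven rather than as a proof.
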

When $\nu=p^r$ has only one prime factor, we can give a formula of $\alpha_{\mu, \nu}$. In this case, suppose $(z_1, \ldots, z_\mu)$ is a solution to (\ref{system}). Then the collection $\{z_1, \ldots, z_\mu\}$ can be divided into groups of $p$ elements such that each group is a rotation of $1, e^{2\pi i/p}, \ldots, e^{2(p-1)\pi i/p}$. Therefore, if $p$ does not divide $\mu$, then $\alpha_{\mu, \nu}=0$. If $p$ divides $\mu$, then 
\begin{equation}\label{ab}
\alpha_{\mu, \nu}=\sum \frac{\mu!}{\left((s_1)!(s_2)!\cdots(s_{k})!\right)^p}
\end{equation}
where $k=\nu/p$, and the sum is over all $s_1, \ldots, s_k\in \zz_{\geq 0}$ such that $s_1+\cdots+s_k=\mu/p$. Since $\beta_{\mu, \nu}=\frac{1}{\nu}\cdot\alpha_{\mu+1, \nu}$, we can translate (\ref{ab}) into a statement about $\beta_{\mu, \nu}$.
\begin{prop}\label{com}
Suppose $\nu=p^r$, where $p$ is a prime number and $r$ is a positive integer. Then $\beta_{\mu, \nu}=0$ when $p$ does not divide $\mu+1$, and when $p$ divides $\mu+1$
\begin{equation}
\beta_{\mu, \nu}=\frac{1}{\nu}\sum \frac{(\mu+1)!}{\left((s_1)!(s_2)!\cdots(s_{k})!\right)^p}
\end{equation}
where $k=\nu/p$, and the sum is over all $s_1, \ldots, s_k\in \zz_{\geq 0}$ such that $s_1+\cdots+s_k=\frac{\mu+1}{p}$. 
\end{prop}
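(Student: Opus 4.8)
The plan is to prove Proposition \ref{com} by analyzing the combinatorial structure of solutions to the system (\ref{system}) when $\nu = p^r$, and then translating the resulting count of $\alpha_{\mu+1, \nu}$ into the claimed formula for $\beta_{\mu, \nu}$ via the identity $\beta_{\mu, \nu} = \frac{1}{\nu}\alpha_{\mu+1, \nu}$. The central structural fact I would establish first is the one quoted just before the proposition: every solution $(z_1, \ldots, z_\mu)$ of (\ref{system}) with $\nu = p^r$ decomposes, as a \emph{multiset}, into groups of size $p$, where each group consists of $p$ elements of the form $\zeta, \zeta\omega, \zeta\omega^2, \ldots, \zeta\omega^{p-1}$ for some $\nu$-th root of unity $\zeta$ and $\omega = e^{2\pi i/p}$. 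This is the genuine content and the main obstacle: I would prove it by working in the cyclotomic field $\qq(\zeta_\nu)$ and using that the minimal polynomial relation among $\nu$-th roots of unity summing to zero is, for a prime power modulus, generated precisely by the ``$p$-term vanishing sums'' $1 + \omega + \cdots + \omega^{p-1} = 0$ scaled by roots of unity. Concretely, the set of integer relations $\sum_{j} c_j \zeta_\nu^j = 0$ among the $\nu$-th roots of unity is the $\zz$-module generated by the rotations of this single basic relation, a classical consequence of the fact that the only prime dividing $\nu$ is $p$.

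Granting the group decomposition, I would count $\alpha_{\mu+1, \nu}$ as follows. A solution is determined by choosing, for each of the $k = \nu/p$ distinct $p$-element groups (indexed by the distinct values $\zeta$ can take up to multiplication by $\langle \omega \rangle$, i.e.\ the $k$ cosets of $\langle \omega\rangle$ in the group of $\nu$-th roots of unity), a nonnegative multiplicity $s_1, \ldots, s_k$ recording how many times that group appears, subject to $s_1 + \cdots + s_k = (\mu+1)/p$. This is where the divisibility condition enters: since the total number of entries is $\mu+1$ and each group contributes $p$ entries, a solution exists only when $p \mid \mu+1$, giving $\alpha_{\mu+1,\nu} = 0$ (hence $\beta_{\mu,\nu}=0$) otherwise. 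For each admissible tuple $(s_1, \ldots, s_k)$, the number of ordered solutions $(z_1, \ldots, z_{\mu+1})$ realizing those multiplicities is the multinomial coefficient $\frac{(\mu+1)!}{\prod_g (\text{multiplicity of each individual root})!}$; since within the $j$-th group all $p$ roots occur with equal multiplicity $s_j$, the denominator is $\big((s_1)! \cdots (s_k)!\big)^p$, yielding exactly formula (\ref{ab}).

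The final step is the straightforward translation. Substituting $\alpha_{\mu+1, \nu}$ from (\ref{ab}) with $\mu$ replaced by $\mu+1$ into $\beta_{\mu, \nu} = \frac{1}{\nu}\alpha_{\mu+1, \nu}$ gives
\begin{equation*}
\beta_{\mu, \nu} = \frac{1}{\nu}\sum \frac{(\mu+1)!}{\big((s_1)!\cdots(s_k)!\big)^p},
\end{equation*}
with the sum over $s_1 + \cdots + s_k = \frac{\mu+1}{p}$, which is precisely the asserted formula. I expect the only real difficulty to be a rigorous justification of the group decomposition of solutions; the counting and the translation are routine once that structural lemma is in hand. If a fully self-contained argument for the decomposition is desired, I would cite the classical description of vanishing sums of roots of unity of prime-power order (originating in work on the structure of such sums) rather than reprove it, since the statement ``for $\nu = p^r$, every vanishing sum of $\nu$-th roots of unity is a $\zz_{\geq 0}$-combination of rotated basic $p$-term relations'' is exactly the tool needed and is well documented in the literature.
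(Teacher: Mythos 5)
Your proposal follows essentially the same route as the paper: the paper likewise derives the formula from the decomposition of any solution of (\ref{system}) with $\nu=p^r$ into $p$-element groups that are rotations of $1, e^{2\pi i/p}, \ldots, e^{2(p-1)\pi i/p}$, then counts ordered tuples by the multinomial coefficient $\mu!/\bigl((s_1)!\cdots(s_k)!\bigr)^p$ and applies $\beta_{\mu,\nu}=\tfrac{1}{\nu}\alpha_{\mu+1,\nu}$. The only difference is that you sketch a justification of the group decomposition (via the relation module of prime-power roots of unity, where the disjointly supported basic relations make the nonnegativity automatic), whereas the paper simply asserts it, implicitly relying on the classical literature.
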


Suppose $\nu=p^rq^s$ has two distinct prime factors, and suppose $(z_1, \ldots, z_\mu)$ is a solution to (\ref{system}). Then by \cite[Corollary 3.4]{LL}, the collection $\{z_1, \ldots, z_\mu\}$ can be divided into groups of $p$ or $q$ elements such that each group is a rotation of $1, e^{2\pi i/p}, \ldots, e^{2(p-1)\pi i/p}$ or a rotation of $1, e^{2\pi/q}, \ldots, e^{2(q-1)\pi i/q}$ respectively. However, this decomposition is not unique, and this is the main difficulty to find a formula for $\alpha_{\mu, \nu}$ in this case. Now, this is already a problem beyond our capability. 

When $\nu$ has at least three distinct prime factors, the statement of \cite[Corollary 3.4]{LL} is not true any more. Therefore, the question becomes much harder and deeper.

\end{document}